\documentclass[12pt]{amsart}
\usepackage[margin=1in,left=0.85in,right=0.85in]{geometry}
\usepackage{amsmath,amssymb,amsfonts,amsthm,mathtools}
\usepackage{hyperref}

\DeclareMathOperator{\Av}{Av}
\DeclareMathOperator{\ex}{ex}
\DeclareMathOperator{\cont}{cont}

\newtheorem{theorem}{Theorem}
\newtheorem{proposition}[theorem]{Proposition}
\newtheorem{lemma}[theorem]{Lemma}
\newtheorem{corollary}[theorem]{Corollary}
\theoremstyle{definition}

\title{On a Quadratic Relation Between Stanley-Wilf limits and F\"{u}redi-Hajnal limits}
\author{Mohamed Omar}
\date{\today}

\subjclass[2020]{05A05, 05D05}
\keywords{permutation patterns, Stanley-Wilf limit, F\"{u}redi-Hajnal limit}

\begin{document}

\begin{abstract}
For a permutation matrix $P$, let $s_P$ denote its Stanley-Wilf limit, the exponential growth rate of the number of $n\times n$ permutation matrices avoiding $P$. Let $c_P$ denote its F\"{u}redi-Hajnal limit, which is the limit $\displaystyle \lim_{n \to \infty} \ex(n,P)/n$ where $\ex(n,P)$ is the maximum number of ones in an $n\times n$ $0$-$1$ matrix avoiding $P$. Cibulka proved the universal quadratic bound $s_P\leq 2.88\,c_P^2$. In this note we improve the constants in Cibulka's result through a so-called ``block contraction" argument. Defining
\[
F(c)=\inf_{t\in\mathbb{N}} \frac{(t!)^{1/t}\,15^{\,c/t}}{c},
\]
for $c>0$, this leads us to the revised inequality $s_P\leq F(c_P)\,c_P^2$. In particular, $F(c)=\log 15+o(1) \approx 2.70805\ldots +o(1)$ as $c\to\infty$, and the constant improves $2.88$ once $c_P \geq 17$.
\end{abstract}

\maketitle

\section{Introduction}

Let $\mathfrak{S}_n$ be the symmetric group on $[n]=\{1,\dots,n\}$. For $\pi\in\mathfrak{S}_k$ and $\sigma\in\mathfrak{S}_n$, we say that $\sigma$ \emph{contains} $\pi$ if there exist indices $1\leq i_1<i_2<\cdots<i_k\leq n$ such that for every $1 \leq a,b \leq k$, one has $\sigma_{i_a}<\sigma_{i_b}$ if and only if $\pi_a<\pi_b$. Otherwise $\sigma$ \emph{avoids} $\pi$. We write
\[
\Av_n(\pi)=\{\sigma\in\mathfrak{S}_n:\sigma\text{ avoids }\pi\}.
\]
The Stanley-Wilf conjecture asserted that for each fixed $\pi$ the sequence $|\Av_n(\pi)|$ is bounded above by $C^n$ for some constant $C=C(\pi)$. The conjecture was proved by Marcus and Tardos via a strengthening in the setting of forbidden submatrices \cite{MarcusTardos2004}.

There is a convenient matrix formulation of this Stanley-Wilf limit. Let $P$ be the $k\times k$ permutation matrix of $\pi$. An $n\times n$ $0$-$1$ matrix $A$ \emph{contains} $P$ if there are row indices $r_1<\cdots<r_k$ and column indices $c_1<\cdots<c_k$ such that $A_{r_i,c_j}=1$ whenever $P_{i,j}=1$. Equivalently, $A$ contains a $k\times k$ submatrix that dominates $P$ entrywise. If no such choice of rows and columns exists, we say that $A$ \emph{avoids} $P$. Let $S_P(n)$ be the number of $n\times n$ permutation matrices avoiding $P$. Then $S_P(n)=|\Av_n(\pi)|$, and we set
\[
 s_P=\lim_{n\to\infty} S_P(n)^{1/n},
\]
which is the Stanley-Wilf limit of $P$. In this light, permutation patterns may be formulated either in terms of permutations or in terms of their permutation matrices. We will move freely between these viewpoints, since the main estimate is most transparent on matrices.

Relatedly, the forbidden submatrix problem asks for the maximum number of ones in an $n\times n$ $0$-$1$ matrix avoiding a fixed pattern. Following F\"uredi and Hajnal \cite{FurediHajnal1992}, define
\[
\ex(n,P)=\max\{|A|:A\in\{0,1\}^{n\times n}\text{ avoids }P\},
\]
where $|A|$ denotes the number of ones in $A$. For permutation matrices $P$, Marcus and Tardos proved the linear bound $\ex(n,P)=O(n)$ \cite{MarcusTardos2004}. It is convenient to record this linear behavior in the normalized limit
\[
 c_P=\lim_{n\to\infty}\frac{\ex(n,P)}{n},
\]
usually called the F\"uredi-Hajnal limit of $P$. The relationship between $s_P$ and $c_P$ has been studied extensively. Klazar showed that linear bounds for $\ex(n,P)$ imply exponential bounds for $S_P(n)$ by a contraction argument \cite{Klazar2000}. Cibulka sharpened this reduction and proved a universal quadratic inequality
\[
 s_P\leq 2.88\,c_P^2
\]
for all permutation matrices $P$ \cite{Cibulka2009}.

The purpose of this note is to strengthen the constant in Cibulka's result to $F(c_P)$ which interpolates between the explicit constant $2.88$ and the asymptotic constant $\log 15 \approx 2.70805 \ldots$ (note all logarithms are natural throughout the manuscript). The number $15$ comes from a two-by-two block contraction in which each nonzero block has $2^4-1=15$ possible $0$-$1$ patterns. This is the basis of Klazar's argument \cite{Klazar2000} for a coarser version of Cibulka's inequality. Our argument for strengthening the constants comes from a similar block contraction argument, but with larger block sizes.

\begin{theorem}\label{thm:main}
For $c>0$ define
\[
F(c)=\inf_{t\in\mathbb{N}} \frac{(t!)^{1/t}\,15^{\,c/t}}{c}.
\]
Then for every permutation matrix $P$ one has
\[
 s_P \leq F(c_P)\,c_P^2.
\]
Moreover $F(c)=\log 15+o(1)$ as $c\to\infty$.
\end{theorem}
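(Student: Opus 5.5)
The plan is to combine a Klazar-type count of contracted matrices with a finer count of preimages, using $t\times t$ blocks where $t$ is a free parameter. Fix $t\in\mathbb{N}$ and write $c=c_P$. To handle $n$ not divisible by $t$, embed an $n\times n$ avoider into size $t\lceil n/t\rceil$ by appending fixed points at the bottom right. This can create a copy of $P$, so instead I would simply note that $S_P(n)$ is supermultiplicative-like enough, or restrict to $n$ divisible by $t$; either way the limit $s_P$ is unaffected.

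\textbf{Contraction.} Partition an $n\times n$ $P$-avoiding permutation matrix $A$ into $m^2$ blocks of size $t\times t$, with $m=n/t$. Let $B$ be the $m\times m$ $0$-$1$ matrix recording which blocks are nonzero. Then $B$ avoids $P$: a copy of $P$ in $B$ lifts to a copy in $A$ by picking one $1$ in each block involved. This works because $P$ has one $1$ per row and per column, so the chosen entries lie in distinct rows and columns of $A$ in the right order. Consequently $|B|\le \ex(m,P)\le (c+\varepsilon)m$ for large $m$.

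\textbf{Counting contracted matrices.} Let $M(m)$ be the number of $m\times m$ $P$-avoiding $0$-$1$ matrices. I would invoke Klazar's $2\times2$ contraction. Contracting such a matrix gives an $(m/2)\times(m/2)$ avoider $B'$, with at most $\ex(m/2,P)$ ones. Each nonzero block has at most $2^4-1=15$ fillings. Hence
\[
M(m)\le M(m/2)\,15^{\ex(m/2,P)}.
\]
Iterating and summing the geometric series gives $M(m)\le 15^{(c+\varepsilon)m(1+o(1))}$. The $o(1)$ absorbs the small-$m$ range where the bound $\ex(m,P)\le(c+\varepsilon)m$ fails.

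\textbf{Counting preimages (the key step).} Given $B$, let $d_i$ be the number of nonzero blocks in block-row $i$, so $\sum_i d_i=|B|\le (c+\varepsilon)m$. I would encode $A$ in two stages.
\begin{enumerate}
\item Each of the $t$ rows in block-row $i$ chooses which of the $d_i$ nonzero blocks contains its $1$. This gives at most $\prod_i d_i^{\,t}$ choices.
\item Since $A$ is a permutation matrix, exactly $t$ rows then land in each block-column $j$. These rows are matched bijectively to the $t$ columns of block-column $j$, in at most $t!$ ways.
\end{enumerate}
The number of preimages is therefore at most $\prod_i d_i^{\,t}\cdot (t!)^{m}$. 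By AM--GM, $\prod_i d_i^t\le \big(\tfrac{1}{m}\sum_i d_i\big)^{tm}\le (c+\varepsilon)^{n}$. Altogether
\[
S_P(n)\le 15^{(c+\varepsilon)(1+o(1))n/t}\,(c+\varepsilon)^n\,(t!)^{n/t}.
\]
Taking $n$-th roots and letting $\varepsilon\to0$ yields
\[
s_P\le c\,(t!)^{1/t}\,15^{c/t}=\frac{(t!)^{1/t}15^{c/t}}{c}\,c^2.
\]
Taking the infimum over $t$ gives $s_P\le F(c_P)c_P^2$.

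The main obstacle is this preimage count. The naive bound $(td_i)^t$ per block-row loses a factor $e$ per entry relative to $(t!)^{1/t}\approx t/e$. Realizing that the in-block choice can be deferred to a per-block-column bijection is what produces $(t!)^{1/t}$.

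\textbf{Asymptotics.} The upper bound $F(c)\le\log 15+o(1)$ would follow from Stirling, $(t!)^{1/t}=\tfrac{t}{e}\big(1+O(\tfrac{\log t}{t})\big)$, with the choice $t=\lceil c\log 15\rceil$. Indeed,
\[
\log\!\big(cF(c)\big)=\log\tfrac{t}{e}+\tfrac{c}{t}\log15+o(1)=\log(c\log15)+o(1),
\]
so $F(c)\le \log 15+o(1)$. For the matching lower bound, the same Stirling estimate shows that $\log\tfrac{t}{e}+\tfrac{c}{t}\log 15$ is minimized at $t\approx c\log15$ for every $t$. Hence the infimum is also $\ge\log15-o(1)$.
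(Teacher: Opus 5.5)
Your proposal is correct and follows essentially the same route as the paper: $t$-contraction to a $P$-avoiding $m\times m$ matrix, the lift bound $(t!)^{m}\prod_i d_i^{t}$ followed by AM--GM, Klazar's $2\times 2$ contraction bound $15^{c_P m(1+o(1))}$ on the number of contracted matrices, an infimum over $t$, and Stirling with $t\approx c\log 15$. Your injective encoding (each row picks a nonzero block in its block-row, then one bijection per block-column) is a streamlined derivation of the same bound that the paper obtains by summing $(t!)^{2n}/\prod_{a,b}x_{a,b}!$ over block-count arrays. Your lower-bound step for $F$ is as informal as the paper's; it becomes rigorous by noting $t!\ge (t/e)^t$, which gives $F(c)\ge\log 15$ for every $c>0$.
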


The function $F(c)$ can be evaluated numerically rather quickly, because the infimum is attained for $t$ near $c\log 15$. In particular the bound already improves the constant $2.88$ once $c \geq 17$.

\begin{corollary}\label{cor:288}
If $c_P\geq 17$, $s_P\leq F(c_P)\,c_P^2$ with $F(c_p)<2.88$.
\end{corollary}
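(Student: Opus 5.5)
The statement is a numerical fact about $F$. By Theorem~\ref{thm:main} we already have $s_P\le F(c_P)\,c_P^2$, so it only remains to prove that $F(c)<2.88$ for every real $c\ge 17$. Since $F$ is an infimum, it suffices to exhibit, for each $c\ge 17$, one integer $t$ with
\[
h_t(c):=\frac{(t!)^{1/t}\,15^{c/t}}{c}<2.88 .
\]
Write $L=\log 15\approx 2.70805$. I will split the range of $c$ into a large-$c$ regime, handled analytically through Stirling's formula, and a bounded initial range, handled by a finite and rigorous numerical verification.

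For large $c$, I take $t=\lceil cL\rceil$, so $t=cL+\delta$ with $0\le\delta<1$. I use the Stirling upper bound $t!\le\sqrt{2\pi t}\,(t/e)^t e^{1/(12t)}$, which gives
\[
\log h_t(c)\le \log\frac{t}{ec}+\frac{cL}{t}+\frac{\log\sqrt{2\pi t}}{t}+\frac{1}{12t^2}.
\]
Put $u=t/(cL)=1+\delta/(cL)$. The first two terms equal $\log L+(\log u-1+1/u)$, and $\log u-1+1/u\le (u-1)^2/2\le 1/(2c^2L^2)$. Hence
\[
\log h_t(c)\le \log L+\frac{1}{2c^2L^2}+\frac{\log\sqrt{2\pi t}}{t}+\frac{1}{12t^2}.
\]
The right-hand side is decreasing in $c$, using $t\ge cL$ and the fact that $\log\sqrt{2\pi t}/t$ is decreasing for $t\ge 1$. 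We need it to be below $\log(2.88/L)\approx 0.0615$. At $c=17$ (so $t\approx 46$) the term $\log\sqrt{2\pi t}/t\approx 0.0616$, so this crude bound just barely fails there. It does, however, succeed for all $c\ge C_0$ with some modest explicit $C_0$; for example, $C_0=20$ gives $t\ge 55$ and $\log\sqrt{2\pi t}/t\approx 0.0531$, which leaves ample room. This proves $F(c)<2.88$ for $c\ge C_0$.

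The main obstacle is the finite range $17\le c\le C_0$, where the asymptotic bound is too lossy and the margin is thin. Near $c=17$ the value $F(c)$ is very close to $2.88$, which is exactly why $17$ is the threshold. Here I would work with exact values of $(t!)^{1/t}$ rather than Stirling's estimate. For a fixed integer $t$, the numerator of $h_t$ is increasing in $c$ and the denominator is the increasing function $c$. So on any interval $[a,b]$ we have
\[
h_t(c)\le \frac{(t!)^{1/t}\,15^{b/t}}{a}\qquad(c\in[a,b]).
\]
I will cover $[17,C_0]$ by finitely many short intervals $[a_j,a_{j+1}]$, say of length $0.1$ or smaller near $c=17$. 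On each interval I choose $t_j$ to be the integer nearest to $a_jL$, trying the neighbouring integers if needed, and check the displayed endpoint bound against $2.88$ with rigorous rounding.

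The first thing to try is a uniform mesh. If the margin at the left end $c=17$ is too thin for the endpoint bound, I will refine the mesh locally there, where $t\in\{46,47\}$ should be the candidates. Combining the two regimes then gives $F(c)<2.88$ for all $c\ge 17$, and together with Theorem~\ref{thm:main} this yields the corollary.
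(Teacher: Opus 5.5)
Your Stirling argument for $c\ge C_0$ is rigorous, and it differs from the paper's: the paper takes $t=\lfloor 3c\rfloor$ for $c\ge 18$ and uses that $(u/e)15^{1/u}$ increases for $u>\log 15$. The gap is the range $17\le c\le C_0$. There you describe a computation but never carry it out, and this range is exactly where the threshold $17$ is decided.

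The parameters you propose also fail as stated:
\begin{itemize}
\item The integer nearest $17L\approx 46.04$ is $46$, and $h_{46}(17)\approx 2.8802>2.88$.
\item The best choice at $c=17$ is $t=48$, with $h_{48}(17)\approx 2.87656$. The term $\log(2\pi t)/(2t)$ moves the minimizer to roughly $cL+\tfrac12(\log(2\pi cL)-1)\approx 48.4$, so $\{46,47\}$ are not the right candidates.
\item Your endpoint bound $(t!)^{1/t}15^{b/t}/a=\tfrac{b}{a}h_t(b)$ throws away the decreasing factor $1/c$. On $[17,17.1]$ it is therefore at least $\tfrac{17.1}{17}F(17.1)\approx 2.89$ for every $t$.
\item The relative margin at $c=17$ is only about $0.0012$. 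You would need intervals of length about $0.02$ there, i.e.\ dozens of certified evaluations, and the proposal supplies none of them.
\end{itemize}

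Either of two observations closes this. The paper's route: for fixed $t$, $(\log h_t)''(c)=1/c^2>0$, so $h_t$ attains its maximum on an interval at an endpoint. With $t=48$, the whole interval $[17,18]$ then follows from $h_{48}(17)\approx 2.87656$ and $h_{48}(18)\approx 2.87443$.

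Simpler still, your own estimate already covers every $c\ge 17$ if you keep the integrality of $t$ instead of relaxing to $t\ge cL$. Since $17L\approx 46.04$, one has $t=\lceil cL\rceil\ge 47$ for all $c\ge 17$. Hence
\[
\log h_t(c)-\log L\le \frac{\log(94\pi)}{94}+\frac{1}{12\cdot 47^2}+\frac{1}{2\cdot 17^2L^2}\approx 0.06051+0.00004+0.00024<0.06156\approx \log\frac{2.88}{L},
\]
so $F(c)\le 2.8778$ for all $c\ge 17$, and no finite check is needed. Your finding that the crude bound ``barely fails'' at $c=17$ comes only from evaluating $\log\sqrt{2\pi t}/t$ at $t\approx 46.04$ rather than at the actual value $t=47$.
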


In Theorem~\ref{thm:main}, we shall see that $F(c)=\log(15)+o(1) \approx 2.70805+o(1)$ as $c \to \infty$. In Corollary~\ref{cor:288}, we see that $F(c)$ decreases in $c$ for positive integers, so we expect $F(c)$ to interpolate between just below $2.88$ and $2.70805$ for $c \geq 17$. Table~\ref{tab} demonstrates the behavior of $F(c)$ as $c$ increases modestly, exhibiting this nature.

\begin{table}[h]
\centering
\begin{tabular}{r|r|r}
$c$ & $t$ & $F(c)$ \\
\hline
17  & 48  & 2.8766 \\
20  & 57  & 2.8552 \\
30  & 84  & 2.8126 \\
50  & 138 & 2.7758 \\
100 & 274 & 2.7453 \\
200 & 545 & 2.7284 \\
\end{tabular}
\caption{Selected values of $F(c)$, interpolating from Cibulka's $2.88$ constant to $\log(15) \approx 2.70805$.}\label{tab}
\end{table}

\section{Block contraction and lifting}

We now describe the block contraction argument, together with the two estimates that feed into it. The first is an upper bound on the number of lifts of a fixed contracted matrix, and the second combines this with an upper bound on the number of contracted matrices in terms of $\ex(n,P)$. 

Fix integers $t,n\geq 1$ and set $N=tn$. Partition $[N]$ into consecutive intervals $I_1,\dots,I_n$ of length $t$. For an $N\times N$ $0$-$1$ matrix $A$, and $a,b \in \{1,2,\ldots,n\}$, define the $I_a \times I_b$ block of $A$ to be the $t \times t$ submatrix of $A$ whose rows are indexed by the elements of $I_a$ and whose columns are indexed by the elements of $I_b$. For brevity, $I_i$ will be often used for the row and/or column blocks defined by the interval $I_i$. Moreover, define the  \emph{$t$-contraction} of $A$, denoted $\cont_t(A)$, to be the $n\times n$ $0$-$1$ matrix $B$ given by
\[
B_{a,b}=1 \quad\text{if and only if}\quad A\text{ has a one in the block }I_a\times I_b.
\]
When $A$ is a permutation matrix, the contraction records which row blocks meet which column blocks.

A basic point is that contraction preserves pattern avoidance for permutation patterns. If $\cont_t(A)$ contains a $k \times k$ permutation matrix $P$, then there are $k$ $t \times t$ blocks on disjoint row and column blocks, where each of these $t \times t$ blocks contains at least one one-entry in $A$. Thus, a permutation matrix that avoids $P$ always contracts to a $0$-$1$ matrix that avoids $P$.

For an $n\times n$ $0$-$1$ matrix $B$, let $\mathcal{L}_t(B)$ be the set of $N\times N$ permutation matrices $A$ whose support is contained in the union of blocks $I_a\times I_b$ with $B_{a,b}=1$. 

\begin{proposition}\label{prop:lifts}
Let $t,n\geq 1$ and $N=tn$. For every $n\times n$ $0$-$1$ matrix $B$ one has
\[
|\mathcal{L}_t(B)| \leq (t!)^n\left(\frac{|B|}{n}\right)^N.
\]
\end{proposition}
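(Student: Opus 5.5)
The plan is to count lifts in two stages: first decide, for every row of $A$, which column block its one lies in; then decide the exact column inside that block. The first stage is governed by the row sums of $B$, the second by $t!$ per column block. Write $d_a=\sum_b B_{a,b}$ for the $a$-th row sum of $B$, so that $\sum_a d_a=|B|$.

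Fix $A\in\mathcal{L}_t(B)$. Each row $r\in I_a$ of $A$ has exactly one one, and it lies in a block $I_a\times I_b$ with $B_{a,b}=1$. Define the \emph{block profile} of $A$ as the map $\phi_A:[N]\to[n]$ sending $r$ to this $b$. The steps are as follows.

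\begin{enumerate}
\item \emph{Counting profiles.} For $r\in I_a$ there are at most $d_a$ choices for $\phi_A(r)$. Hence the number of possible profiles is at most $\prod_{a=1}^n d_a^{\,t}$. If some $d_a=0$, then $\mathcal{L}_t(B)=\emptyset$ and there is nothing to prove.
\item \emph{Counting lifts with a given profile.} Fix a profile $\phi$. For each column block $I_b$, the rows $\phi^{-1}(b)$ must be matched bijectively to the $t$ columns of $I_b$, because $A$ is a permutation matrix.
  \begin{itemize}
  \item If $|\phi^{-1}(b)|\neq t$ for some $b$, there is no lift with profile $\phi$.
  \item Otherwise there are exactly $t!$ bijections for each $b$.
  \end{itemize}
  So at most $(t!)^n$ matrices $A$ have profile $\phi$. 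Since $A$ is determined by its profile together with these bijections, we get $|\mathcal{L}_t(B)|\le (t!)^n\prod_a d_a^{\,t}$.
\item \emph{AM--GM.} Apply the AM--GM inequality to the row sums:
\[
\prod_{a=1}^n d_a \le \Bigl(\frac{1}{n}\sum_a d_a\Bigr)^n=\Bigl(\frac{|B|}{n}\Bigr)^n .
\]
Raising this to the power $t$ gives $\prod_a d_a^{\,t}\le (|B|/n)^{tn}=(|B|/n)^N$, which is the claimed bound.
\end{enumerate}

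The only delicate point is step 2. We must check that distinct lifts are not identified, i.e.\ that $A$ really is recovered from $\phi_A$ together with the $n$ within-block bijections. We also deliberately overcount: profiles whose column-block loads are not all equal to $t$ are included in step 1 but contribute nothing in step 2. Neither issue causes trouble, so I expect no real obstacle.

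The main design choice is to count row-by-row via profiles rather than bounding the permanent of $B\otimes J_t$ by Bregman's theorem. The Bregman route gives a bound in terms of $((td_a)!)^{1/d_a}$, which does not factor cleanly as $(t!)^n$ times a power of $|B|/n$.
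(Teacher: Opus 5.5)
Your proof is correct and is essentially the paper's argument. Both relax the requirement that each column block receive exactly $t$ rows, reach the same intermediate bound $(t!)^n\prod_a d_a^{\,t}$, and finish with AM--GM on the row sums of $B$. The difference is only bookkeeping: the paper groups lifts by the block-count array $(x_{a,b})$ and evaluates $\sum_{x}\prod_b 1/x_{a,b}! = d_a^t/t!$ via the coefficient of $z^t$ in $e^{d_a z}$, whereas your profile map counts the $d_a^{\,t}$ functions from the rows of $I_a$ to the admissible column blocks directly, which is the same identity in multinomial form.
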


\begin{proof}
Fix $A\in\mathcal{L}_t(B)$. For each block $I_a\times I_b$ let $x_{a,b}$ be the number of ones of $A$ lying in that block. The constraints defining $\mathcal{L}_t(B)$ imply that $x_{a,b}=0$ whenever $B_{a,b}=0$. Since $A$ is a permutation matrix, each of the $t$ rows in a fixed row block $I_a$ contains exactly one one-entry, and these ones must fall into column blocks according to the numbers $x_{a,1},\dots,x_{a,n}$. Thus $\sum_{b=1}^n x_{a,b}=t$ for each $a$. Similarly, looking column-blockwise gives $\sum_{a=1}^n x_{a,b}=t$ for each $b$.

Now fix a nonnegative integer array $X=(x_{a,b})$ satisfying these conditions, and count the number of $A$ that realize it. For each $a$, distributing the $t$ rows of $I_a$ among the column blocks so that exactly $x_{a,b}$ rows are assigned to column block $I_b$ can be done in $t!/\prod_b x_{a,b}!$ ways. Dually, for each $b$ there are $t!/\prod_a x_{a,b}!$ ways to distribute the $t$ columns of $I_b$ among the row blocks with the prescribed multiplicities. After these choices, each block $I_a\times I_b$ comes with $x_{a,b}$ selected rows and $x_{a,b}$ selected columns, and inside that sub-square the ones form a bijection between the selected rows and columns, giving $x_{a,b}!$ possibilities. Multiplying over all blocks shows that the number of $A$ inducing $X$ is at most
\[
\frac{(t!)^{2n}}{\prod_{a,b} x_{a,b}!}.
\]
Summing over all admissible $X$ yields an upper bound for $|\mathcal{L}_t(B)|$.

To make this sum tractable, we drop the column-sum constraints. This only enlarges the set of arrays $X$, and the resulting sum factorizes over the row blocks. Writing $r_a$ for the number of ones in the $a$th row of $B$, the inner sum over a fixed row block counts weak compositions of $t$ into $r_a$ parts with weights $1/(x_1!\cdots x_{r_a}!)$. This is the coefficient of $z^t$ in $(e^z)^{r_a}$, hence it is $r_a^t/t!$. Therefore
\[
|\mathcal{L}_t(B)|\leq (t!)^{2n}\prod_{a=1}^n \frac{r_a^t}{t!}=(t!)^n\prod_{a=1}^n r_a^t.
\]
Finally, $\prod_{a=1}^n r_a\leq (|B|/n)^n$ by the arithmetic-geometric mean inequality, and since $N=tn$ this gives the stated bound.
\end{proof}

We also need an upper bound on the number of $0$-$1$ matrices that avoid $P$ in terms of $\ex(n,P)$. The constant $15$ that appears here is a bookkeeping device coming from a dyadic contraction that merges consecutive pairs of rows and columns, as in Klazar's reduction \cite{Klazar2000}; see also \cite{Cibulka2009}.

\begin{lemma}[\cite{Klazar2000}]\label{lem:15}
For every permutation matrix $P$ and every $n\geq 1$, the set $T_n(P)$ of $n\times n$ $0$-$1$ matrices avoiding $P$ satisfies
\[
|T_n(P)| \leq 15^{c_P n}.
\]
\end{lemma}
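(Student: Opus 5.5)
The approach is Klazar's dyadic contraction together with induction on $n$. Applied to an arbitrary $0$-$1$ matrix, the $2$-contraction still preserves avoidance of the permutation matrix $P$. Indeed, if $\cont_2(A)$ contains $P$ on row blocks $a_1<\dots<a_k$ and column blocks $b_1<\dots<b_k$, pick one one-entry of $A$ inside each of the $k$ relevant blocks. These blocks lie in pairwise distinct row blocks and pairwise distinct column blocks, ordered as in $P$, so the chosen entries form a copy of $P$ in $A$. Hence $A\in T_{2m}(P)$ implies $\cont_2(A)\in T_m(P)$.

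Next I count fibres. Fix $B\in T_m(P)$. Every matrix $A$ with $\cont_2(A)=B$ has all-zero $2\times 2$ blocks wherever $B_{a,b}=0$. Wherever $B_{a,b}=1$, the block is one of the $2^4-1=15$ nonzero $2\times 2$ patterns. So the fibre has size exactly $15^{|B|}\le 15^{\ex(m,P)}$. I then need the pointwise bound $\ex(m,P)\le c_P m$ for every $m$, not just in the limit. I will get it from superadditivity, $\ex(m+m',P)\ge \ex(m,P)+\ex(m',P)$ for permutation matrices, as in Pach--Tardos and Cibulka; this is the standard fact behind the existence of $c_P$. Fekete's lemma then gives $c_P=\sup_m \ex(m,P)/m$. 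Combining the two steps,
\[
|T_{2m}(P)|\le |T_m(P)|\cdot 15^{c_P m}.
\]
Iterating from $|T_1(P)|\le 2\le 15^{c_P}$ (note $c_P\ge 1$) gives, for $n=2^j$,
\[
|T_n(P)|\le 15^{c_P(1+1+2+4+\cdots+2^{j-1})}=15^{c_P n}.
\]

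For general $n$, I will contract with intervals of length $2$, except possibly one final interval of length $1$, so the contraction is $\lceil n/2\rceil\times\lceil n/2\rceil$. The argument that contraction preserves avoidance is unchanged. The boundary blocks have size $2\times1$, $1\times2$ or $1\times1$, so they admit at most $3$ nonzero patterns, which is fewer than $15$. This gives $|T_n(P)|\le |T_{\lceil n/2\rceil}(P)|\,15^{c_P\lceil n/2\rceil}$.

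I expect the main obstacle to be this rounding. Naive induction on the hypothesis $15^{c_P m}$ loses a factor of roughly $15^{c_P}$ at each odd step. The first thing I would try is to exploit the savings in the last row and column of $B$: there each one costs only a factor $3$ instead of $15$, and this should absorb the extra $c_P$ in the exponent. Failing that, I would note that the paper only applies Lemma~\ref{lem:15} through the exponential rate $\lim |T_n(P)|^{1/n}$. For that purpose it suffices to prove the bound for $n=2^j$ and then use monotonicity: embed $T_n(P)$ into $T_{2^j}(P)$ by padding with zero rows and columns. Making the rate argument rigorous would then require a supermultiplicativity of $|T_n(P)|$, obtained by placing two avoiding matrices in block-diagonal position. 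If that route is needed, I would check carefully that this block-diagonal placement preserves avoidance of $P$, and I have not yet verified that.
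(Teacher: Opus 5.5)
Your dyadic step is exactly the Klazar reduction the paper relies on (the paper gives no proof of its own for this lemma, only \cite{Klazar2000}). It correctly yields $|T_{2^j}(P)|\le 15^{c_P 2^j}$. Getting $\ex(m,P)\le c_P m$ for all $m$ from superadditivity and Fekete is also fine; the only nit is that $c_P\ge 1$ fails for the $1\times 1$ pattern, but then $|T_1(P)|=1$.

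The gap is the bound for arbitrary $n$, which the statement requires and you leave open. The boundary-savings idea does not close it as sketched. The constraints you actually have on $B$ are $|B|\le c_P m$ overall and at most $c_P(m-1)$ ones in its top-left $(m-1)\times(m-1)$ part. Balancing these, with interior ones costing $15$ and last-row/column ones costing $3$, only bounds the fibres by about $15^{c_P(m-1)}3^{c_P}$. So a factor of roughly $3^{c_P}$ survives at every odd step.

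Conversely, your remark that the rate needs supermultiplicativity is off. The naive recursion $|T_n(P)|\le|T_{\lceil n/2\rceil}(P)|\,15^{c_P\lceil n/2\rceil}$ loses $15^{c_P}$ only at $O(\log n)$ steps. It gives $|T_n(P)|\le 15^{c_P(n+\lceil\log_2 n\rceil+1)}$, hence $\limsup|T_n(P)|^{1/n}\le 15^{c_P}$, which suffices for the paper's use. What it does not give is the exact bound $15^{c_P n}$ for each $n$.

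For that exact bound you genuinely need supermultiplicativity, and the version you propose is false. Block-diagonal placement does not preserve avoidance when $P$ is itself a direct sum: for $P=I_2$ and $A=A'=(1)$ one gets $A\oplus A'=I_2$.

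The fix is to choose the placement according to $P$:
\begin{itemize}
\item if $P$ is not of the form $P_1\oplus P_2$, use $A\oplus A'$, since a copy of $P$ meeting both diagonal blocks would exhibit such a splitting;
\item otherwise use the skew (anti-diagonal) placement, by the symmetric argument.
\end{itemize}
This covers every $P$ because no permutation matrix decomposes both ways. Suppose its first $j$ rows used exactly the first $j$ columns and its first $j'$ rows used exactly the last $j'$ columns. Comparing the shorter of the two row sets forces $j\ge k$ or $j'\ge k$, a contradiction.

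This gives $|T_{m+m'}(P)|\ge |T_m(P)|\,|T_{m'}(P)|$; the same placement also gives the superadditivity of $\ex(\cdot,P)$ that you cite. Fekete's lemma then yields, for every $n$,
\[
|T_n(P)|^{1/n}\le \sup_m |T_m(P)|^{1/m}=\lim_{j\to\infty}|T_{2^j}(P)|^{1/2^j}\le 15^{c_P}.
\]
This is the lemma exactly as stated, not merely a rate statement.
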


\section{Proofs of Main Results}
This section is dedicated to the proofs of the main results: Theorem~\ref{thm:main} and Corollary~\ref{cor:288}. We start with the proof of Theorem~\ref{thm:main}.

\begin{proof}[Proof of Theorem~\ref{thm:main}]
Fix $t\geq 1$ and write $N=tn$. Every  $N\times N$ permutation matrix $A$ that avoids $P$ contracts to an $n\times n$ $0$-$1$ matrix $B=\cont_t(A)$ that avoids $P$ as well. Consequently
\[
S_P(N) \leq \sum_{B\in T_n(P)} |\mathcal{L}_t(B)|.
\]
For each $B\in T_n(P)$ we have $|B|\leq \ex(n,P)$, and Proposition~\ref{prop:lifts} gives
\[
S_P(N) \leq |T_n(P)|\,(t!)^n\left(\frac{\ex(n,P)}{n}\right)^N.
\]
Applying Lemma~\ref{lem:15} and taking $N$th roots yields
\[
S_P(N)^{1/N} \leq 15^{c_P/t}\,(t!)^{1/t}\,\frac{\ex(n,P)}{n}.
\]
Now let $n\to\infty$. By definition $\ex(n,P)/n\to c_P$, hence the right-hand side converges to $15^{c_P/t}(t!)^{1/t}c_P$. Since this is valid for every integer $t\geq 1$, we obtain
\[
 s_P \leq \inf_{t\in\mathbb{N}} 15^{c_P/t}(t!)^{1/t}c_P = F(c_P)\,c_P^2.
\]

For the asymptotic behaviour of $F(c)$, take $t=\lceil c\log 15\rceil$. Stirling's formula implies $(t!)^{1/t}=(t/e)(1+o(1))$ as $t\to\infty$, and therefore
\[
\frac{(t!)^{1/t}15^{c/t}}{c} = \left(\frac{t}{c}\right)\frac{1}{e}\,\exp\!\left(\frac{c}{t}\log 15\right)\,(1+o(1)).
\]
With $t\sim c\log 15$, the exponential factor approaches $e$, while $t/c\to \log 15$. This gives $F(c)\leq \log 15+o(1)$. The matching lower bound $F(c)\geq \log 15+o(1)$ follows from the same Stirling expansion and the observation that the function $t\mapsto \log(t/e)+(c/t)\log 15$ is minimized when $t$ is on the order of $c\log 15$. Altogether $F(c)=\log 15+o(1)$.
\end{proof}
This now allows us to prove Corollary~\ref{cor:288}.

\begin{proof}[Proof of Corollary~\ref{cor:288}]
First suppose that $17\leq c\leq 18$. Evaluating the definition of $F(c)$ at $t=48$ gives $F(c)\leq H(c):=(48!)^{1/48}15^{c/48}/c$. Since $(\log H(c))'=\log(15)/48-1/c$, the function $H$ decreases and then increases on $[17,18]$, so its maximum is attained at one of the endpoints. Direct calculation gives $H(17)=2.876558\ldots$ and $H(18)=2.874428\ldots$, and hence $F(c)<2.88$ throughout this interval.

Now suppose that $c\geq 18$ and take $t=\lfloor 3c\rfloor$. Stirling's upper bound gives
\[
F(c)\leq \frac{t}{ec}\,15^{c/t}(2\pi t)^{1/(2t)}e^{1/(12t^2)}.
\]
Set $x=t/c$. Since $3-1/c\leq x\leq 3$, we have $x\geq 53/18>\log 15$. The function $q(u):=(u/e)15^{1/u}$ is increasing for $u>\log 15$, since $(\log q(u))'=(u-\log 15)/u^2$. It follows that 

\[
\frac{t}{ec}15^{c/t} = \frac{x}{e}15^{1/x}=q(x)\leq q(3)=(3/e)15^{1/3}.
\]
On the other hand, the function $R(u):=(2\pi u)^{1/(2u)}e^{1/(12u^2)}$ is decreasing for $u\geq 1$, since
\[
(\log R(u))'=\frac{1-\log(2\pi u)}{2u^2}-\frac{1}{6u^3}<0.
\]
As $t=\lfloor 3c\rfloor\geq 54$, we therefore obtain $R(t) \leq R(54)$ and altogether then
\[
F(c)\leq \frac{3}{e}\,15^{1/3}(108\pi)^{1/108}e^{1/34992}
=2.872769\ldots<2.88.
\]
The result follows.
\end{proof}

\section*{Acknowledgment}

The author thanks the anonymous referees for thoughtful comments on the manuscript, and is partially funded by research funds from York University, and NSERC Discovery Grant \#RGPIN-2025-06304.



\footnotesize

 \begin{thebibliography}{00}

\bibitem{Arratia1999}
R.~Arratia,
\emph{On the Stanley-Wilf conjecture for the number of permutations avoiding a given pattern},
Electron. J. Combin. \textbf{6} (1999), no.~1, Note 1, 4 pp.

\bibitem{Cibulka2009}
J.~Cibulka,
\emph{On constants in the F\"uredi-Hajnal and the Stanley-Wilf conjecture},
J. Combin. Theory Ser. A \textbf{116} (2009), no.~2, 290-302.

\bibitem{FurediHajnal1992}
Z.~F\"uredi and P.~Hajnal,
\emph{Davenport-Schinzel theory of matrices},
Discrete Math. \textbf{103} (1992), no.~3, 233-251.

\bibitem{Klazar2000}
M.~Klazar,
\emph{The F\"uredi-Hajnal conjecture implies the Stanley-Wilf conjecture},
in \emph{Formal Power Series and Algebraic Combinatorics} (Moscow, 2000),
pp.~250-255, Springer, Berlin, 2000.


\bibitem{MarcusTardos2004}
A.~Marcus and G.~Tardos,
\emph{Excluded permutation matrices and the Stanley-Wilf conjecture},
J. Combin. Theory Ser. A \textbf{107} (2004), no.~1, 153-160.

\end{thebibliography}
\end{document}